\documentclass[11pt]{article}
\usepackage[T1]{fontenc}
\usepackage[utf8]{inputenc}
\usepackage{lmodern} % "Reverts" font changes caused by the above command
\usepackage{amsmath,amsfonts,amssymb,amsthm}
\usepackage{graphicx}
\usepackage{float}
\usepackage{natbib}

\usepackage[a4paper,top=2.5cm,bottom=2.5cm,left=2.5cm,right=2.5cm]{geometry}
\usepackage[colorlinks=true, allcolors=blue]{hyperref}
\usepackage{caption}
\usepackage[justification=centering]{subcaption}

\usepackage{booktabs}
\usepackage[table]{xcolor}

\usepackage{xspace}

\usepackage{tikz}

\usepackage{listings}
\usepackage{threeparttable}
\usepackage{multirow}
\usepackage{siunitx}

\usepackage{authblk}

\definecolor{darkviolet}{rgb}{0.58, 0.0, 0.83}

\newcommand{\ndelta}[2]{{#1}^{#2}}
\newcommand{\x}{\boldsymbol{x}}

\providecommand{\card}[1]{\lvert#1\rvert}

\newcommand{\solver}[1]{\texttt{#1}}

\usepackage{mathtools}

\usepackage[skins]{tcolorbox}

\usepackage{enumitem}
\setlist[itemize,3]{label=$\diamond$}

\newenvironment{shortitem}[1][]{\begin{itemize}[topsep=0pt, itemsep=0pt, parsep=0.5pt, leftmargin=*, #1]}{\end{itemize}}

\newcommand{\R}{\mathbb R}
\newcommand{\N}{\mathbb N}

\newcommand{\itercounter}{\tau}

\newcommand{\cola}{Q}

\newcommand{\X}{\boldsymbol{X}}
\newcommand{\xb}{\bar{\boldsymbol{x}}}
\newcommand{\Xb}{\bar{\boldsymbol{X}}}

\newcommand{\zbar}{\bar z}

\renewcommand{\stop}{\textsc{End}}
\newtheorem*{lem}{Lemma~2}

\theoremstyle{definition}
\newtheorem{examp}{Example}
\newenvironment{example}{\begin{examp} \myendexampletrue}{\ifmyendexample\myendexample\fi \end{examp}}

\let\myendexample=\relax
\def\myexamplesymbol#1{\def\myendexample
 {{\unskip\nobreak\hfil\penalty50
   \quad\hbox{}\nobreak\hfil #1
   \parfillskip=0pt \finalhyphendemerits=0 \par}}}
\myexamplesymbol{$\Diamond$}
\newif\ifmyendexample

\usepackage{todonotes}

\title{A note on the convergence guarantees of RLT-based algorithms for polynomial optimization} %{\small \julio{ALTERNATIVE: RLT-based algorithms for polynomial optimization: a theoretical (and practical) nuance}}}
\author[1,2]{Alejandro Barros-González}
\author[2,3]{Julio González-Díaz\thanks{Corresponding author: julio.gonzalez@usc.es}}
\author[2,4]{Brais González-Rodríguez}
\author[2,3]{Iria Rodríguez Acevedo}
\affil[1]{University of Vigo}
\affil[2]{CITMAga (Galician Center for Mathematical Research and Technology), 15782 Santiago de Compostela, Spain}
\affil[3]{Department of Statistics, Mathematical Analysis and Optimization and MODESTYA Research Group, University of Santiago de Compostela, 15782 Santiago de Compostela, Spain}
\affil[4]{Department of Statistics and Operational Research and SiDOR Research Group, University of Vigo}

\date{\today}

\begin{document}

\maketitle

\begin{abstract}
This paper identifies and addresses a mathematical oversight in one of the foundational results on the Reformulation-Linearization Technique (RLT) for polynomial optimization. We then argue that, although correctness of the original result can be easily recovered by adding a minor and natural assumption, not being aware of this nuance may lead to the loss of convergence guarantees in RLT-based algorithms.
\end{abstract}

\textbf{Keywords.} Polynomial Optimization, Reformulation-Linearization Technique, Global Optimization.

\medskip

\textbf{MSC Codes.} 90C26, 90C23, 90C30.

\section{Introduction}

The Reformulation-Linearization Technique (RLT), as introduced by \cite{Sherali1992}, serves as the mathematical foundation for numerous global optimization frameworks targeting continuous polynomial programming. Notable software implementations of this technique include the global solvers \solver{RLT-POS} \citep{Dalkiran2016} and \solver{RAPOSa} \citep{Gonzalez-Rodriguez:2023,raposaconic}.

The so-called bound-factor constraints constitute a core element of the RLT framework, which must be added to the formulation to tighten the ensuing linear relaxations and guarantee the convergence of the spatial branch-and-bound algorithm as the variable domains are sequentially partitioned. However, a major computational bottleneck of RLT-based schemes stems from the fact that the number of these constraints grows exponentially with the problem size. Consequently, numerous filtering and reduction strategies have been proposed in the literature to mitigate this combinatorial explosion \citep{sherali1997,Sherali2013,dalkiran2018,gonzalezrodriguez2025degree}.

This combinatorial burden was already anticipated in the seminal work of \cite{Sherali1992}, who presented a result stating that, given a polynomial optimization problem of degree $\delta$ (defined by the highest degree of any monomial appearing in the formulation), the bound-factor constraints of degree strictly smaller than $\delta$ are redundant, as they are implied by those of degree exactly $\delta$. In this paper, we reveal an inaccuracy in the proof of the latter result and introduce a minor, yet necessary, missing assumption to restore its validity. We show how ignoring this requirement can lead to a loss of global convergence guarantees. Finally, we briefly discuss why this theoretical gap has remained unnoticed in the literature without impacting the practical behavior of RLT-based algorithms.

\section{The reformulation-linearization technique}\label{sec:rlt}
We consider (continuous) polynomial optimization problems given by
\begin{equation}
\begin{aligned}
\text{minimize} & \quad \phi_0(\x)\\
\text{subject to}  & \quad \phi_r(\x)\ge \beta_r, & r=1,2,\ldots, m_1 \\
& \quad \phi_r(\x)=\beta_r, & r=m_1+1,\ldots,m\\
& \quad \x\in\Omega \subset \mathbb{R}^{\card{N}}\text{,}
\end{aligned}
\label{eq:PO}
\tag*{$PP(\Omega)$}
\end{equation}
where $N$ denotes the set of variables, each $\phi_r(\mathbf{x})$ is a polynomial of degree $\delta_r \in \mathbb{N}$, and the set $\Omega = \{ \x \in \mathbb{R}^{\card{N}}: 0 \leq l_j \leq x_j \leq u_j < \infty, \, \forall j \in N \} \subset \mathbb{R}^{\card{N}}$ is a hyperrectangle containing the feasible region. The degree of the problem is given by $\delta=\max_{r \in \{0,\ldots,R\}} \delta_r$.

A monomial can be represented as a multiset of variable indices $J$ on $N$, where index multiplicity dictates the exponents and multiset addition corresponds to monomial multiplication. Let $\ndelta{N}{\lambda}$ denote the collection of all such multisets with cardinality $\lambda$ (monomial degree).

The RLT linearization process substitutes each monomial product with an independent continuous variable:
\begin{equation}
X_J = \prod_{j \in J}x_j.
\label{eq:identity}
\end{equation}
Crucial to the relaxation are the bound-factor constraints. For any $\lambda\in \mathbb N$ and multisets $J_1, J_2$ such that $J_1 + J_2 \in \ndelta{N}{\lambda}$, the corresponding constraint of degree $\lambda$ is defined as:
\begin{equation*}
F_{\Omega}^{\lambda}(J_1,J_2)=\prod_{j\in J_1}{(x_j-l_j)}\prod_{j\in J_2}{(u_j-x_j)}\ge 0.
%\label{eq:BFC}
\end{equation*}
Denoting $[\phi(\x)]_L$ as the linear operator replacing every nonlinear monomial in $\phi(\x)$ with its corresponding RLT variable $X_J$, the linear RLT relaxation of~\ref{eq:PO} is given by:
\begin{equation}
\begin{aligned}
\text{minimize} & \quad [\phi_0(\x)]_L & \\
\text{subject to}  & \quad [\phi_r(\x)]_L \ge \beta_r, & r=1,2,\ldots, m_1 \\
& \quad [\phi_r(\x)]_L =\beta_r, & r=m_1+1,\ldots,m\\
& \quad [F_{\Omega}^{\delta}(J_1,J_2)]_L \ge 0, & J_1 + J_2 \in \ndelta{N}{\delta}\\
& \quad \x\in\Omega \subset \mathbb{R}^{\card{N}}\text{.}\\
\end{aligned}
\label{eq:LP}
\tag*{$LP(\Omega)$}
\end{equation}

In Figure~\ref{fig:RLTalg} we present the general scheme of an RLT-based algorithm, which proceeds by defining a series of linear relaxations, $LP(\Omega^k)$, of polynomial optimization problems analogous to~\ref{eq:PO}, but with respect to different hyperrectangles $\Omega^k$. For each $j \in N$ and each $k \in \N$, let $l^k_j$ and $u^k_j$ denote the lower and upper bounds of variable $x_j$ in $LP(\Omega^k)$. Importantly, since bound-factor constraints of $LP(\Omega^k)$ depend on the $l^k_j$ and $u^k_j$ bounds, they vary across nodes. Once a linear relaxation $LP(\Omega^k)$ has been solved and an optimal solution $(\xb, \Xb)$ is available, $\theta_j^k$ denotes a measure of the violation of the RLT-defining identities in Eq.~\eqref{eq:identity} in which variable $j \in N$ is involved.\footnote{Refer, for instance, to Section~5.4 in \cite{Gonzalez-Rodriguez:2023} for additional details.} The convergence of this branch-and-bound algorithm follows from Theorem~1 in \cite{Sherali1992}.

\begin{figure}[!htbp]
\centering
\begin{tcolorbox}
\small
\textbf{Initialization.} Let $LB\vcentcolon=-\infty$, $UB\vcentcolon=+\infty$, and $\x^{best}\vcentcolon=\emptyset$. Let $\itercounter\vcentcolon=1$, $\Omega^1 \vcentcolon= \Omega$, $\cola\vcentcolon=\{1\}$, and $LB^1=-\infty$.

\textbf{Stage 1 \emph{(main)}.} Choose $k\in \cola$ such that $LB^k=\min_{s\in \cola} LB^s$. Let $\cola \vcentcolon= \cola \backslash \{ k\}$. Solve problem $LP(\Omega^k)$.
	\begin{shortitem}
		\item If $LP(\Omega^k)$ is infeasible, go to {\bf Stage~2}.
		\item If $LP(\Omega^k)$ is feasible, let $\zbar^k$ be the optimal value and let $(\xb^k, \Xb^k)$ be an optimal solution.
	\begin{shortitem}
		\item If $\zbar^k<UB$ and $\theta_j^k = 0$ for all $j \in N$:
		\begin{shortitem}
			\item \emph{Update upper bound.} $UB\vcentcolon=\zbar^k$. Let $\x^{best}\vcentcolon=\xb^k$.
			\item \emph{Prune.} Remove all $s\in \cola$ such that $LB^s \ge UB$. Go to {\bf Stage~2}.
		\end{shortitem} 
		\item If $\zbar^k < UB$ and $\theta_j^k > 0$ for some $j \in N$:
		\begin{shortitem}
			\item \emph{Branch.} Choose $p \in N$ such that $\theta_p^k = \max_{j \in N} \theta_j^k$. Branch at $\beta \vcentcolon= \bar x^k_p$, by defining $\Omega^{\itercounter+1} \vcentcolon= \Omega^k \cap \{ \x \in \R^{\card{N}} \text{ s.t. } l^k_p \leq x_p \leq \beta \}$ and $\Omega^{\itercounter+2} \vcentcolon= \Omega^k \cap \{ \x \in \R^{\card{N}} \text{ s.t. } \beta \leq x_p \leq u^k_p \}$.
	%and let $LP(\Omega^{\itercounter+1})$ and $LP(\Omega^{\itercounter+2})$ be the corresponding problems.
	\item Update queue $\cola \vcentcolon= \cola \cup \{ \itercounter+1, \itercounter+2\}$. Let $LB^{\itercounter+1} = LB^{\itercounter+2} \vcentcolon= \zbar^k$. Let $\itercounter \vcentcolon= \itercounter + 2$. Go to {\bf Stage~2}.
		\end{shortitem}
	\item If $\zbar^j \ge  UB$. \emph{Prune branch.} Go to {\bf Stage~2}.
	\end{shortitem}
\end{shortitem}

\textbf{Stage 2 \emph{(control)}.}
\emph{Update lower bound.} $LB\vcentcolon=\min\{\min_{k\in \cola}{LB^k}, UB \}$.
\begin{shortitem}
	\item If $LB=UB$, \stop:
	\begin{shortitem}
		\item If $\x^{best}=\emptyset$, \ref{eq:PO} is infeasible.
		\item If $\x^{best}\neq\emptyset$, $\x^{best}$ is a global optimum and $UB$ is the optimal value of \ref{eq:PO}.
	\end{shortitem} 
	\item Otherwise, go to {\bf Stage~1}.
\end{shortitem}
\end{tcolorbox}
\caption{Basic scheme of an RLT-based algorithm.}
\label{fig:RLTalg}
\end{figure}

\section{The theoretical oversight}

Below, we restate Lemma~2 in \cite{Sherali1992} and provide a faithful transcription of its proof, adapted to our notation.

\begin{lem}[\cite{Sherali1992}] Consider $\delta'\in \mathbb N$, with $1\le\delta'<\delta$. Then, constraints $[F_{\Omega}^{\delta'}(J_1,J_2)]_L\ge 0$, where $J_1+J_2\in N^{\delta'}$ are all implied by those of degree $\delta$, $[F_{\Omega}^{\delta}(J_1,J_2)]_L\ge 0$, where $J_1+J_2\in N^{\delta}$.
\end{lem}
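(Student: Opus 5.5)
The plan is a downward induction on the degree. It is enough to show that every bound-factor constraint of degree $\delta'-1$ is implied by those of degree $\delta'$, for $2\le\delta'\le\delta$. Chaining these implications from $\delta$ down to any $\delta'<\delta$ then gives the lemma.

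The key algebraic step is the identity $(x_j-l_j)+(u_j-x_j)=u_j-l_j$, valid for every $j\in N$. Fix $J_1+J_2\in N^{\delta'-1}$ and any index $j\in N$. Multiplying the identity by $F_{\Omega}^{\delta'-1}(J_1,J_2)$ gives the polynomial identity
\[
(u_j-l_j)\,F_{\Omega}^{\delta'-1}(J_1,J_2)=F_{\Omega}^{\delta'}(J_1+\{j\},J_2)+F_{\Omega}^{\delta'}(J_1,J_2+\{j\}).
\]
Both sides are polynomials and $[\cdot]_L$ is linear. Applying it term by term, each monomial on either side is replaced by the same RLT variable $X_J$, or by $x_j$ when the degree is one. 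Hence the identity survives linearization:
\[
(u_j-l_j)\,[F_{\Omega}^{\delta'-1}(J_1,J_2)]_L=[F_{\Omega}^{\delta'}(J_1+\{j\},J_2)]_L+[F_{\Omega}^{\delta'}(J_1,J_2+\{j\})]_L .
\]
Here I would check that the constant factor $u_j-l_j$ commutes with $[\cdot]_L$. It does, because the bounds are data and not variables. If both degree-$\delta'$ constraints on the right hold, the right-hand side is nonnegative. Dividing by $u_j-l_j$ then yields $[F_{\Omega}^{\delta'-1}(J_1,J_2)]_L\ge 0$.

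The main obstacle is this division step, which is where I expect the hidden assumption to sit. The argument needs $u_j-l_j>0$ for at least one index $j$ that we are free to append. If $l_j=u_j$, the identity only yields $0=[F^{\delta'}]_L+[F^{\delta'}]_L$, which says nothing about the lower-degree constraint. Since $j$ can be chosen freely in $N$, it suffices that some coordinate of $\Omega$ is nondegenerate. The cleanest way to state this is to assume $l_j<u_j$ for all $j\in N$.

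I would therefore prove the statement under this non-degeneracy assumption on $\Omega$. I would also flag that it is exactly what can fail in the algorithm of Figure~\ref{fig:RLTalg}. There, branching at $\beta=\bar x^k_p$ may produce $\beta=l^k_p$ or $\beta=u^k_p$, giving a child hyperrectangle $\Omega^{k'}$ that is degenerate in coordinate $p$. In such nodes the lower-degree bound-factor constraints need not be implied by those of degree $\delta$.

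To confirm that the assumption is genuinely needed, I would look for a small counterexample. A candidate is $N=\{1\}$ with $l_1=u_1$ and degree $\delta=2$. There the degree-$2$ constraints only force $X_{11}$ to satisfy relations with $x_1$, and they fail to pin $x_1$ itself, so the degree-$1$ constraints are not implied.
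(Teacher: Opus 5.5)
Your core argument is the surrogate argument the paper transcribes from Sherali and Tuncbilek. You add $[F_{\Omega}^{\delta'}(J_1+\{t\},J_2)]_L$ and $[F_{\Omega}^{\delta'}(J_1,J_2+\{t\})]_L$ to get $(u_t-l_t)[F_{\Omega}^{\delta'-1}(J_1,J_2)]_L$, divide, and induct downward. You also locate the flaw exactly where the paper does: the division needs $u_t-l_t>0$, so the statement as written is false, and what you prove is a corrected version.

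Where you go beyond the paper is in using that $t$ is free, so that a single nondegenerate coordinate suffices. This is correct for the lemma as literally stated, because the lemma uses \emph{all} of $N^{\delta}$. The condition is also sharp. If every coordinate is fixed, the degree-$\delta$ constraints reduce to $[\prod_{j\in J}(x_j-l_j)]_L=0$ for $J\in N^{\delta}$. Each of these can be solved for its own $X_J$, so they restrict no lower-degree variable. Your refinement does not clash with Example~1, which keeps only the eight constraints of $x_1x_2x_3$. With all of $N^3$ one has $[F_{\Omega}^{3}(\{2,2\},\{3\})]_L+[F_{\Omega}^{3}(\{2\},\{2,3\})]_L=x_2-X_{23}$, which restores the violated inequality (and $t=3$ restores the other one). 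The paper's stronger hypothesis, $u_t>l_t$ for every $t$, buys the monomial-by-monomial version: the constraints for $J$ follow from those for $J+\{t\}$. That is the version that matters when only the bound-factor constraints of selected monomials are generated, as in Example~1 and in practical solvers.

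Your side remarks should be adjusted to match.
\begin{itemize}
\item With the full $N^{\delta}$, a child node degenerate only in coordinate $p$ does \emph{not} lose the implication: by your own argument you can pick $t\neq p$. The implication fails only with filtered bound-factor sets or when all coordinates are fixed.
\item As the paper notes, Lemma~3 of Sherali and Tuncbilek gives $l^k_p<\bar x^k_p<u^k_p$ for the branching variable. So the continuous algorithm of Figure~\ref{fig:RLTalg} never creates such a node. The real risk is the mixed-integer extension, which fixes binaries to $[0,0]$ or $[1,1]$.
\item Your one-variable counterexample with $\delta'=1$ is vacuous inside $LP(\Omega)$, since $\x\in\Omega$ already imposes the degree-$1$ constraints. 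Taking $l_1=u_1=0$ with $\delta=3$ is more telling. There the degree-$3$ constraints force $X_{111}=0$ but leave $X_{11}$ free, while the degree-$2$ ones force $X_{11}=0$.
\end{itemize}
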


\begin{proof}[Proof (Transcription of the proof in \cite{Sherali1992}).]
 For any $\delta'$, $1 \leq \delta' \leq \delta - 1$, consider the surrogate of the constraints $[F_{\Omega}^{\delta'+1}(J_1+ \{t\},J_2)]_L\ge 0$ and $[F_{\Omega}^{\delta'+1}(J_1,J_2+ \{t\})]_L\ge 0$, where $J_1+J_2\in N^\delta$, $\card{J_1+J_2}=\delta'$, and $t\in N$.
{\footnotesize
\[
\begin{array}{l}
[F_{\Omega}^{\delta'+1}(J_1\cup \{t\},J_2)]_L + [F_{\Omega}^{\delta'+1}(J_1,J_2\cup \{t\})]_L= [(x_t-l_t)F_{\Omega}^{\delta'}(J_1,J_2)]_L+[(u_t-x_t)F_{\Omega}^{\delta'}(J_1,J_2)]_L  \\
\qquad = [x_t F_{\Omega}^{\delta'}(J_1,J_2)]_L - [l_t F_{\Omega}^{\delta'}(J_1,J_2)]_L + [u_t F_{\Omega}^{\delta'}(J_1,J_2)]_L -[x_t F_{\Omega}^{\delta'}(J_1,J_2)]_L = [(u_t-l_t) F_{\Omega}^{\delta'}(J_1,J_2)]_L\ge 0.
\end{array}
\]
}
Since $(u_t-l_t)\ge 0$, it follows that $[F_{\Omega}^{\delta'}(J_1,J_2)]_L\ge 0$ is implied by $[F_{\Omega}^{\delta'+1}(J_1\cup \{t\},J_2)]_L\ge 0$ and $[F_{\Omega}^{\delta'+1}(J_1,J_2\cup \{t\})]_L\ge 0$. The required result follows by the principle of induction, and this completes the proof.
\end{proof}

The oversight in the proof occurs in the first statement of the last paragraph. For the assertion to hold, $(u_t-l_t)\ge 0$ must be replaced with $(u_t-l_t)>0$. Importantly, as we illustrate in Example~\ref{example} below, it is not only that the argument in the proof is incorrect: the statement in Lemma~2 is not valid unless it incorporates the assumption that $(u_t-l_t)>0$ for all $t\in N$. One might even be tempted to argue that one could just make this assumption when first defining the hyperrectangle $\Omega$. Yet, this last option would not solve the issue since, to guarantee the convergence of the RLT-based algorithm in Figure~\ref{fig:RLTalg} to a global optimum, Lemma~2 must be applied not only with respect to $LP(\Omega)$ at the root node, but also with respect to all $LP(\Omega^k)$ relaxations.

\begin{example}\label{example}
Suppose that, at some node in the RLT-based algorithm, we must solve the linear relaxation of the following polynomial optimization problem:
\[
\begin{array}{ll}
\min & x_1x_2x_3-x_2x_3\\[0.1mm]
\text{s.t.}
& x_2 \leq x_1 \\
& 0\le x_1\le 0, \quad 0\le x_2\le 1, \quad 0\le x_3\le 1.
\end{array}
\]
Introducing the RLT variables $X_{12}$, $X_{13}$, $X_{23}$, $X_{123}$, and including the eight linearized bound-factor constraints associated with the degree-3 monomial \(x_1x_2x_3\) yields, after some straightfoward computations, the following linear relaxation:

\[
\begin{array}{ll}
\min & X_{123}-X_{23} \\[0.1mm]
\text{s.t.}
& x_2 \leq x_1 \\[0.1mm]
& X_{123} - X_{12} - X_{13} + x_1 = 0 \\
& X_{123} -X_{12} = 0 \\
& X_{123} -X_{13} = 0 \\
& X_{123} = 0 \\
& 0\le x_1\le 0, \quad 0\le x_2\le 1, \quad 0\le x_3\le 1.
\end{array}
\]
Note that $X_{23}$ is entirely unconstrained in the relaxation above because its associated terms in the degree-3 bound-factor constraints cancel out, driven primarily by the zero bounds of $x_1$. Then, given $\alpha>0$, all solutions of the form $(\x, \X) = (0,0,0,0,0,\alpha,0)$ are feasible and allow to get arbitrarily large negative values for the objective function. However, the linearized bound-factor constraints for the degree-2 monomial $x_2x_3$ are given by:
\[
\begin{array}{l}
-X_{23}+x_2 \ge 0\\
-X_{23}+ x_3 \ge 0\\
\phantom{-}X_{23} \ge 0\\
\phantom{-}X_{23}-x_2-x_3+1 \ge 0,
\end{array}
\]
and the first two inequalities are violated at any such point. Incorporating them into the relaxation yields an optimal value of $0$, because the system $\{0 \le x_2 \leq x_1 \leq 0, \, X_{23} \ge 0, \, X_{23} \le x_2\}$ forces $X_{23} = 0$ across all feasible solutions. Hence, the linearized bound-factor constraints for $x_1x_2x_3$ fail to imply those for $x_2x_3$.
\end{example}

\section{Practical implications}

The behavior illustrated in Example~1 can induce inconsistencies within an RLT-based algorithm. For instance, the solver may compute an incorrect lower bound at a child node that is strictly smaller than that of its parent node. Indeed, in Example~1 we would get a child node with an unbounded objective function, which should never happen once the root node was bounded.

Given the above discussion, it is natural to wonder why this theoretical oversight has remained unnoticed until now, without impacting the performance of RLT-based solvers such as \solver{RLT-POS} \citep{Dalkiran2016} and \solver{RAPOSa} \citep{Gonzalez-Rodriguez:2023}. To the best of our understanding, the explanation comes from Lemma~3 in \cite{Sherali1992}, since this result implies that an RLT-based solver like the one in Figure~\ref{fig:RLTalg} will never branch on a variable that is at one of its bounds in the solution $\xb$. Thus, branching point $\beta \vcentcolon= \bar x^k_p$ will always be such that $l^k_p < \beta < u^k_p$, and so the issue in Example~1 does not arise (at least not beyond potential numerical issues when $l^k_p$ and $u^k_p$ are very close to each other).

Consequently, Lemma~3 in \cite{Sherali1992} provides a natural safeguard for continuous formulations. However, in the recent work \cite{raposainteger}, the authors present an extension of the RLT-based algorithm for mixed-integer problems, a setting where the structural vulnerability from Example~1 resurfaces. Following standard practice, when branching on a binary variable at a branching point $\beta \in (0,1)$, instead of setting bounds so that $0\leq x_j\leq \beta$ and $\beta \leq x_j \leq 1$, one should set them to $0\leq x_j\leq 0$ and $1 \leq x_j \leq 1$, obtaining tighter relaxations. Yet, as we have argued above, enforcing these discrete bounds without adjusting the underlying bound-factor constraints can corrupt the relaxation and induce bounding inconsistencies. Fortunately, this implementation pitfall can be easily averted. A straightforward remedy is to structurally eliminate a variable once its bounds coincide, reformulating a reduced polynomial optimization problem and regenerating its corresponding bound-factor constraints from scratch.

\section*{Acknowledgments}
Supported through project PID2020-116587GB-I00 funded by MINECO and the ERDF and through project PID2021-124030NB-C32 funded by MICIU/AEI/10.13039/501100011033/ and by ERDF/EU. Supported by the Xunta de Galicia (Consellería de Educación, Ciencia, Universidades e FP) under the GRC grants ED431C 2024/26 and ED431C 2025/03. Iria Rodríguez-Acevedo acknowledges the support from Consellería de Cultura, Educación, Formación Profesional e Universidades, Xunta de Galicia, through grant ED481A-2023-061. Brais González-Rodríguez acknowledges the support from MICIU, through grant BG23/00155.

\bibliographystyle{ecta}
\bibliography{references}

\end{document}